\newcommand{\C}{{\mathbb C}}
\newcommand{\Z}{{\mathbb Z}}
\newcommand{\E}{{\mathsf E}}
\newcommand{\cA}{{\mathcal A}}
\newcommand{\cB}{{\mathcal B}}
\newcommand{\cH}{{\mathcal H}}
\newcommand{\cD}{{\mathcal D}}
\newcommand{\cP}{{\mathcal P}}
\newcommand{\cS}{{\mathcal S}}
\newcommand{\cT}{{\mathcal T}}
\newcommand{\alp}{\alpha}
\newcommand{\eps}{\varepsilon}
\renewcommand{\phi}{\varphi}
\theoremstyle{plain}
\newtheorem{lemma}{Lemma}
\newtheorem{proposition}{Proposition}
\newtheorem{corollary}{Corollary}
\newtheorem{theorem}{Theorem}
\newcommand{\refc}[1]{~\ref{c:#1}}
\newcommand{\refl}[1]{~\ref{l:#1}}
\newcommand{\refp}[1]{~\ref{p:#1}}
\newcommand{\refs}[1]{~\ref{s:#1}}
\newcommand{\reft}[1]{~\ref{t:#1}}
\newcommand{\refb}[1]{~\cite{b:#1}}
\newcommand{\refe}[1]{\eqref{e:#1}}
\newcommand{\seq}{\subseteq}
\newcommand{\sbs}{\subset}
\newcommand{\stm}{\setminus}
\newcommand{\est}{\varnothing}
\newcommand{\hC}{{\widehat{\C_p}}}
\newcommand{\hcA}{{\widehat{\cA}}}
\newcommand{\hcS}{{\widehat{\cS}}}
\author{Vsevolod F. Lev}
\email{seva@math.haifa.ac.il}
\address{Department of Mathematics, The University of Haifa at Oranim,
  Tivon 36006, Israel}
\author{Oriol Serra}
\email{oriol.serra@upc.edu}
\address{Department of Mathematics and Institute of Mathematics
   Barcelona-Tech, Universitat Polit\`ecnica de Catalunya}
\thanks{Supported by the Spanish Agencia Estatal de Investigación under
   projects PID2020-113082GBI00 and the Severo Ochoa and María de Maeztu
   Program for Centers and Units of Excellence in R\&D (CEX2020-001084-M)}
\title[Towards $3n-4$]%
  {Towards $3n-4$ \\ in groups of prime order}
\subjclass[2020]{Primary: 11P70; Secondary: 11B25}
\keywords{Additive combinatorics, sumsets, small doubling}
\begin{document}
\baselineskip = 16pt

\maketitle

\begin{abstract}
We show that if $A$ is a subset of a group of prime order $p$ such that
$|2A|<2.7652|A|$ and $|A|<1.25\cdot10^{-6}p$, then $A$ is contained in an
arithmetic progression with at most $|2A|-|A|+1$ terms, and $2A$ contains
an arithmetic progression with the same difference and at least $2|A|-1$
terms. This improves a number of previously known results.
\end{abstract}

\section{Introduction}

A classical result in additive combinatorics, Freiman's $(3n-4)$-theorem,
says that if $A$ is a finite set of integers satisfying $|2A|\le 3|A|-4$,
then $A$ is contained in an arithmetic progression of length $|2A|-|A|+1$.

It is believed that an analogue of Freiman's theorem holds for the
``not-too-large'' subsets of the prime-order groups; that is, if $\cA$ is a
subset of a group of prime order such that $|2\cA|\le 3|\cA|-4$ then, subject
to some mild density restrictions, $\cA$ is contained in an arithmetic
progression with at most $|2\cA|-|\cA|+1$ terms. The precise form of this
(and indeed, somewhat more general) conjecture can be found
in~\cite[Conjecture~19.2]{b:G13}.

For an integer $m\ge 1$, we denote by $\C_m$ the cyclic group of order $m$.
Let $p$ be a prime. Over sixty years ago, Freiman himself showed
\refb{Freiman61} that a subset $\cA\seq\C_p$ is contained in a progression
with at most $|2\cA|-|\cA|+1$ terms provided that $|2\cA|<2.4|\cA|-3$ and
$|\cA|<p/35$. Much work has been done to improve Freiman's result in various
directions; we list just a few results of this kind.

R{\o}dseth \refb{Rodseth06} showed that the assumption $|\cA|<p/35$ can be
relaxed to $|\cA|<p/10.7$. Green and Ruzsa \refb{GR06} pushed the doubling
constant from $2.4$ up to $3$, at the cost of a stronger density assumption
$|\cA|<p/10^{215}$. In \refb{SZ09}, Serra and Z{\'e}mor obtained a result
without any density assumption other than the conjectural one, but at the
cost of reducing essentially the doubling coefficient; namely, assuming that
$|2\cA|\le(2+\eps)|\cA|$ with $\eps<0.0001$. An improvement, allowing in
particular $\eps<0.1368$, was obtained by Candela, Gonz\'alez-S\'anchez, and
Grynkiewicz \refb{CGG21}. Candela, Serra, and Spiegel~\refb{CSS20} improved
the doubling coefficient to $2.48$ under the assumption $|\cA|<p/10^{10}$,
and this was further improved by Lev and Shkredov\refb{LS20} to $2.59$ and
$|\cA|<0.0045p$, respectively.

We have mentioned only several most relevant results; variations and
extensions, such as the results on the asymmetric sumset $\cA+\cB$ and
restricted sumset $\cA\dot+\cA$, are intentionally left out. A systematic
coverage of the topic can be found in \cite[Chapter~19]{b:G13}.

In this paper, we prove the following result.
\begin{theorem}\label{t:main}
Let $p$ be a prime, and suppose that a set $\cA\seq\C_p$ satisfies
$|2\cA|<2.7652|\cA|-3$. If $20\le|\cA|<1.25\cdot10^{-6}p$, then $\cA$ is
contained in an arithmetic progression with at most $|2\cA|-|\cA|+1$ terms,
and $2\cA$ contains an arithmetic progression with the same difference and at
least $2|\cA|-1$ terms.
\end{theorem}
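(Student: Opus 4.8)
The plan is to adapt the now-standard "rectification" strategy: pass from $\cA\seq\C_p$ to an isomorphic image of a set of integers, apply Freiman's $(3n-4)$-theorem there, and transfer the conclusion back. The first step is to invoke the strongest available Freiman-type result in $\C_p$ with a smaller doubling coefficient — here the Lev–Shkredov bound \refb{LS20} giving doubling up to $2.59$ under $|\cA|<0.0045p$ — to obtain, as a base case, that $\cA$ is contained in an arithmetic progression of bounded length whenever $|2\cA|<2.59|\cA|-3$. One then needs to bridge the gap from $2.59$ to $2.7652$. The natural mechanism is an iteration: assuming $2.59|\cA|\le|2\cA|<2.7652|\cA|-3$, one studies the structure of $\cA$ via its large Fourier coefficients or via a Freiman isomorphism onto $\Z$, using that the doubling is still well below $3$.

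Concretely, I would run the argument through the following steps. First, a \emph{covering/translate} step: since $|2\cA|<3|\cA|$, a Plünnecke–Ruzsa or Katz–Koester type estimate bounds $|\cA+\cA+\cA|$ and hence shows $2\cA$ is covered by few translates of $\cA-\cA$; combined with the density hypothesis $|\cA|<1.25\cdot10^{-6}p$ this forces $\cA$ to be ``rectifiable,'' i.e.\ Freiman-isomorphic (as a $2$-isomorphism) to a set of integers $\cA'\sbs\Z$ of the same cardinality. Second, once inside $\Z$ one has $|2\cA'|=|2\cA|<2.7652|\cA'|-3$, which is too weak to apply the classical $(3n-4)$-theorem directly, so I would instead use the \emph{quantitative} refinements of Freiman's theorem in $\Z$ (in the spirit of Freiman's $3n-3$ and the Jin/Grynkiewicz-type extensions, or the "multiple" Freiman theorem): for doubling just below $3$, $\cA'$ lies in a progression whose length is controlled by $|2\cA'|-|\cA'|+1$ plus a correction that vanishes under the extra room provided by the strict inequality and the factor $2.7652<3$. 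Third, a \emph{transfer} step pulls the progression back to $\C_p$: the $2$-isomorphism maps the containing progression in $\Z$ to an arithmetic progression in $\C_p$ with the same difference, and the subprogression of $2\cA'$ of length $\ge 2|\cA'|-1$ — which exists in $\Z$ by the $(3n-4)$-theorem's addendum — transfers verbatim to give the second assertion about $2\cA$. A final \emph{bootstrapping} step handles the density: the rectification initially costs a worse density bound, so one iterates, each pass improving the constant hidden in $|\cA|<cp$ until it reaches $1.25\cdot10^{-6}$, or alternatively one uses a one-shot rectification lemma (Bilu–Lev–Ruzsa style, or Green–Ruzsa) tuned so that $2.7652$-doubling and density $1.25\cdot10^{-6}p$ suffice in a single application.

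The \textbf{main obstacle} is the rectification step under a doubling coefficient exceeding $2.59$ and approaching $3$: standard rectifiability results become lossy precisely when the doubling is close to $3$, because then $\cA$ can genuinely look like two or more arithmetic progressions glued together, and one must rule out — using the density hypothesis — the "wrap-around" configurations in $\C_p$ that have no analogue in $\Z$. I expect this to require a careful case analysis according to the structure of the set $2\cA\stm\cA$ (or of the difference set $\cA-\cA$), together with a Fourier-analytic input bounding the second-largest Fourier coefficient of $1_\cA$ away from $|\cA|$, to certify that $\cA$ occupies a genuinely one-dimensional piece of $\C_p$. The numerology $2.7652$ and $1.25\cdot10^{-6}$ is surely the output of optimizing several competing inequalities in this case analysis, so the quantitative bookkeeping — rather than any single conceptual difficulty — will be where most of the work lies.
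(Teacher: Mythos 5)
Your high-level picture—rectify $\cA$ to $\Z$, apply the $(3n-4)$-theorem, transfer back, with Fourier bias to certify one-dimensionality—is in the right neighborhood, but two of your proposed mechanisms do not work as stated, and the actual engine of the paper's proof is absent from your plan.

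First, the "bootstrap/iteration" idea is not realizable. You propose to start from the Lev--Shkredov $2.59$ result and somehow iterate to push the doubling threshold to $2.7652$, or to iterate the rectification to improve the density constant. There is no natural iteration here: applying a rectification theorem once does not produce a stronger input to feed back in, and the $2.59$ result is not used as a black box in the paper. What the paper actually borrows from \refb{LS20} is a single Fourier-analytic estimate (Proposition~\refp{rect} and Corollary~\refc{bias}), which converts the hypothesis $|2\cA| < K|\cA|$ with small density into a lower bound $\eta_\cA > \frac{8}{13}K - 1$ on the normalized Fourier bias. Combined with Freiman's arc lemma (Lemma~\refl{arcs}), this forces a large chunk of $\cA$—more than $\frac{4}{13}K|\cA|$ elements—into a single arithmetic progression of length $\le (p+1)/2$. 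Second, your claim that "quantitative refinements of Freiman's theorem in $\Z$" are needed with "a correction that vanishes" is confused: once you are genuinely in $\Z$ with doubling below $3$, the plain $(3n-4)$-theorem already gives the exact conclusion; there is no correction term. The entire difficulty is rectification, and your proposal says essentially nothing concrete about how to carry it out at doubling $2.7652$.

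The missing structural argument is the heart of the paper. Let $\cA_0$ be a \emph{maximal} subset of $\cA$ lying in a progression of $\le (p+1)/2$ terms. If $\cA_0$ lies in a short progression ($\le 2\cdot10^5|\cA_0|$ terms), a single dilation by $2$ rectifies all of $\cA$ at once, exploiting the geometric constraint that $\cA\stm\cA_0$ sits in an interval forced by the maximality of $\cA_0$. Otherwise, the Fourier-bias bound gives $|\cA_0| > \frac{4}{13}K|\cA|$, hence $|2\cA_0| < \frac{13}{4}|\cA_0| - \frac{9}{4}$. The key new input—Lev's theorem on small doubling in cyclic groups (\refb{Lev23}, stated here as Theorem~\reft{94}), through Proposition~\refp{4k-8_partial}—shows that a $1$-dimensional integer preimage $A_0$ of $\cA_0$ with this doubling would have to lie in a short progression, which we just excluded. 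Hence $\dim(A_0)=2$, and the Candela--Serra--Spiegel lemma (Lemma~\refl{Freiman_2lines}) places $A_0$ in a disjoint union of two progressions with pairwise disjoint sumsets. The final contradiction is a delicate elementary case analysis on the relative positions of these two progressions and of $\cA\stm\cA_0$ inside $\C_p$, using Cauchy--Davenport several times. None of these steps—the maximal-subset device, the dimension-$2$ dichotomy via Lev's theorem, or the two-progression geometry—appear in your outline; they cannot be replaced by the vague iteration and covering steps you propose, and they are precisely where the constants $2.7652$ and $1.25\cdot10^{-6}$ originate.
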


Our argument follows closely that in~\refb{CSS20}. The improvements come
primarily from applying a result of Lev~\refb{Lev23} that establishes the
structure of small-doubling sets in cyclic groups (instead of an earlier
result of Deshouillers and Freiman~\refb{DF03}), and also from using an
estimate from a recent paper of Lev and Shkredov~\refb{LS20}.

In the next section we collect the results needed for the proof of
Theorem~\reft{main}. The proof itself is presented in the concluding
Section~\refs{proof}.

\section{Preparations}

This paper is intended for the reader familiar with the basic notions and
results from the area of additive combinatorics, such as the sumsets,
additive energy, Freiman isomorphism, Cauchy-Davenport and Vosper theorems,
the Pl\"{u}nnecke–Ruzsa inequality etc; they will be used without any further
explanations. Our notation and terminology are also quite standard. It may be
worth recalling, nevertheless, that a subset of an abelian group is called
\emph{rectifiable} if it is Freiman-isomorphic to a set of integers, and that
the \emph{additive dimension} of a subset $A\seq\Z$, denoted $\dim(A)$, is
the largest integer $d$ such that $A$ is Freiman-isomorphic to a subset of
$\Z^d$ not contained in a hyperplane. By $\phi_m$ we denote the canonical
homomorphism from $\Z$ onto the quotient group $\C_m\cong\Z/m\Z$. The
\emph{size} of an arithmetic progression is the number of its terms.


The core new component used in the proof of Theorem~\reft{main} is the
following result.
\begin{theorem}[Lev~{\cite[Theorem~1.1]{b:Lev23}}]\label{t:94}
Let $m$ be a positive integer. If a set $\cA\seq\C_m$ satisfies
$|2\cA|<\frac94|\cA|$, then one of the following holds:
\begin{itemize}
\item[(i)] There is a subgroup $\cH\le\C_m$ such that $\cA$ is contained in
    an $\cH$-coset and $|\cA|>C^{-1}|\cH|$, where
    $C=2\cdot10^5$. 
\item[(ii)] There is a proper subgroup $\cH<\C_m$ and an arithmetic
    progression $\cP$ of size $|\cP|>1$ such that $|\cP+\cH|=|\cP||\cH|$,
    $\cA\seq \cP+\cH$, and
       $$ (|\cP|-1)|\cH|\le|2\cA|-|\cA|. $$
\item[(iii)] There is a proper subgroup $\cH<\C_m$ such that $\cA$ meets
    exactly three $\cH$-cosets, the cosets are not in an arithmetic
    progression, and
      $$ 3|\cH|\le |2\cA|-|\cA|. $$
\end{itemize}
\end{theorem}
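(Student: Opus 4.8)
The idea is to peel off the part of $\cA$ responsible for genuinely ``group-theoretic'' behaviour and reduce what remains to a statement about finite sets of integers, where Freiman's $(3n-4)$-theorem and the dimension inequality $|2A|\ge 3|A|-3$ (for $A\seq\Z$ with $\dim A\ge2$) are available. The threshold $9/4$ is chosen precisely so that these two facts force one-dimensionality: if $\cA$ is Freiman-isomorphic to a set $A\seq\Z$ with $|A|\ge4$, then $|2A|=|2\cA|<\tfrac94|\cA|\le3|A|-3$ rules out $\dim A\ge2$, so $A$---hence $\cA$---lies in an arithmetic progression. Alternative~(i) will collect the case in which $\cA$ is dense in the subgroup it generates, while (ii) and (iii) will record that $\cA$ becomes, after projection to a suitable quotient, either one-dimensional (lying in a coset progression) or ``stably'' two-dimensional (spread over three non-collinear cosets with all six pairwise sums distinct).

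Translating, I assume $0\in\cA$, and replace $\C_m$ by $\langle\cA\rangle$: since each of (i)--(iii) for $\langle\cA\rangle$ yields the corresponding statement for $\C_m$, I may assume $\cA$ generates $\C_m$, so that $\cA$ lies in no coset of a proper subgroup. If $|\cA|>C^{-1}m$ then (i) holds with $\cH=\C_m$, so assume $|\cA|\le C^{-1}m$. Let $\cH:=\{h:h+2\cA=2\cA\}$ be the stabiliser of $2\cA$ and $\pi\colon\C_m\to\C_m/\cH$ the projection; then $\cH\ne\C_m$, since $\cH=\C_m$ would give $2\cA=\C_m$ and $|\cA|>\tfrac49 m$, contradicting $|\cA|\le C^{-1}m$. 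Writing $\bar\cA:=\pi(\cA)$, the set $\bar\cA$ generates $\C_m/\cH$, has aperiodic sumset (by maximality of $\cH$), satisfies $|2\bar\cA|=|2\cA|/|\cH|$, and---as $|\cA|\le|\bar\cA|\,|\cH|$---also $|2\bar\cA|<\tfrac94|\bar\cA|$; moreover $|\bar\cA|\le|2\bar\cA|=|2\cA|/|\cH|<\tfrac94C^{-1}\,|\C_m/\cH|$, so $\bar\cA$ occupies only a tiny fraction of $\C_m/\cH$. Any conclusion (i)--(iii) obtained for $\bar\cA$ in $\C_m/\cH$ lifts to $\cA$: a coset progression $\bar\cP+\bar\cH'$ containing $\bar\cA$ pulls back to $\cP+\pi^{-1}(\bar\cH')$ with $\cP$ meeting distinct $\pi^{-1}(\bar\cH')$-cosets, so the directness condition is automatic, and---since $2\cA$ is \emph{exactly} $\cH$-periodic---$|2\cA|-|\cA|\ge(|2\bar\cA|-|\bar\cA|)|\cH|$, so the numerical inequalities of (ii) and (iii) survive. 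Hence, replacing $(\C_m,\cA)$ by $(\C_m/\cH,\bar\cA)$, I may assume in addition that $2\cA$ is aperiodic, while keeping $\cA$ generating $\C_m$ and of cardinality a tiny fraction of $m$.

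For such $\cA$ two cases arise. If $\cA$ is rectifiable, let $A\seq\Z$ be an integer image; then $|2A|<\tfrac94|A|$, and, inspecting $|\cA|\le3$ by hand (a three-element non-collinear image gives alternative (iii)) and using the dimension inequality for $|\cA|\ge4$, the set $A$ lies in an arithmetic progression whose length is at most $|2A|-|A|+1$ by Freiman's $(3n-4)$-theorem (applicable since $|2A|<\tfrac94|A|$ forces $|2A|\le3|A|-4$ whenever $|\cA|\ge4$); this is alternative (ii). If $\cA$ is not rectifiable, then---since for small generating sets the obstruction to rectifiability is a non-trivial subgroup---$\cA$ contains a coset $c+\cD$ of some $\{0\}\ne\cD<\C_m$. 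As $2\cA$ is aperiodic, $\cA$ is not $\cD$-periodic, and a Kneser--Kemperman--Vosper analysis of $\cA+\cD$---balancing the fact that $\cA$ contains a full $\cD$-coset, which forces many cosets of $2\cA$ to be full, against aperiodicity of $2\cA$, which forces $\cA$ to be sufficiently sparse in the cosets it does not fill---confines $\cA$ to a bounded number of cosets of a subgroup $\cH'\supseteq\cD$ and classifies the admissible configurations. Since $\cA$ generates $\C_m$ it cannot lie in a single coset; if it lies in two cosets (necessarily in progression) one gets (ii) with $\cH'$; and the three-coset cases give (ii) if the cosets form a progression and (iii) otherwise, where the trivial bound $|X+X|\ge|X|$ for the partially filled cosets of $2\cA$, together with the full cosets supplied by the $\cD$-coset inside $\cA$, yields $|2\cA|-|\cA|\ge3|\cH'|$; configurations spanning four or more cosets of $\cH'$ are reduced to the previous ones by Freiman's $(3n-4)$-theorem applied to the set of cosets met.

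The principal obstacle is this non-rectifiable branch: turning ``$\cA$ contains a coset of a non-trivial subgroup'' into exactly one of the three clean alternatives, with the precise inequalities on $|2\cA|-|\cA|$, requires a detailed case analysis of how $\cA$ can be distributed among the cosets of $\cD$---tracking which coset-sums of $2\cA$ are full and which merely partial, using Cauchy--Davenport and Kneser inside $\cD$, and invoking aperiodicity repeatedly to discard the dense patterns---together with a Kemperman-structure-theorem-style enumeration of the few admissible two- and three-coset configurations. A secondary but unavoidable difficulty is proving the rectification dichotomy with an explicit constant and pinning down the small cases $|\cA|\le5$ and the near-equality instances of Freiman's theorem tightly enough to yield the stated value $C=2\cdot10^5$.
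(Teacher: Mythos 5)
This statement is not proved in the paper at all: it is quoted verbatim from Lev~\cite{b:Lev23}, where its proof is a long and intricate argument, so your attempt has to stand on its own as a proof of that external theorem. As it stands it is a road map rather than a proof, and its pivotal step is false as stated. In the non-rectifiable branch you assert that, after reducing to $\cA$ generating $\C_m$, of small density, with $2\cA$ aperiodic, non-rectifiability forces $\cA$ to contain a full coset $c+\cD$ of some nontrivial subgroup, and all the subsequent Kneser--Kemperman--Vosper analysis feeds on the full cosets of $2\cA$ this supplies. That is not true. Take $\cH\le\C_m$ of prime order $q\ge7$, $m=qr$ with $r$ large and odd, $x$ projecting to a generator of $\C_m/\cH$, and $\cA=(\cH\stm\{a\})\cup(x+\{0,1,2\})$ (the second piece a short progression inside $x+\cH$). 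Then $\cA$ generates $\C_m$, $|\cA|=q+2$, $|2\cA|=2q+5<\frac94|\cA|$, the density is below $C^{-1}$, and $2\cA$ is aperiodic since $2(x+\{0,1,2\})$ does not fill its $\cH$-coset; $\cA$ is not rectifiable because it contains $\cH\stm\{a\}$, whose sumset has size $q<2(q-1)-1$, yet $\cA$ contains no coset of any nontrivial subgroup ($\cH$ is of prime order and neither $\cH$-coset is full, while the projection to $\C_r$ is a two-element set in an odd-order group). The set of course satisfies alternative (ii), but your argument never reaches it: the dichotomy on which the whole hard branch rests fails, and what is actually needed there is a quantitative ``dense-piece-in-a-coset'' structure theorem in the spirit of Deshouillers--Freiman and \cite{b:Lev23} --- which is precisely the statement being proved.

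Beyond this, the two items you flag at the end as remaining ``obstacles'' --- obtaining the explicit constant $C=2\cdot10^5$ and carrying out the coset-configuration analysis with the exact inequalities $(|\cP|-1)|\cH|\le|2\cA|-|\cA|$ and $3|\cH|\le|2\cA|-|\cA|$ --- are not finishing touches but the entire substance of the theorem, occupying most of Lev's paper; deferring them leaves nothing proved. There are also smaller gaps in the parts you do argue: when you pass to the quotient by the stabiliser of $2\cA$, alternative (i) for $\bar\cA$ does not lift to alternative (i) for $\cA$, since from $|\bar\cA|>C^{-1}|\bar\cH|$ you only get $|\cA|\ge|\bar\cA|$, not $|\cA|>C^{-1}|\bar\cH||\cH|$; and in the rectifiable branch, Freiman's $(3n-4)$-theorem confines the integer model $A$ to a short progression, but a Freiman $2$-isomorphism is defined only on $\cA$ and does not automatically transport that ambient progression back to $\C_m$, so an extra rigidity argument is needed before you can claim alternative (ii) with $\cH=\{0\}$.
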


The following lemma originating from \refb{CSS20} relates the additive
dimension of a set with its rectifiability.
\begin{lemma}\label{l:higherdim}
Let $l$ be a positive integer, and suppose that $A$ is a set of integers
satisfying $\{0,l\}\seq A\seq[0,l]$ and $\gcd(A)=1$. If there is a proper
subgroup $H<\C_l$ such that the image of $A$ under the composite homomorphism
$\Z\to\C_l\to\C_l/H$  is rectifiable, then $\dim(A)\ge 2$.
\end{lemma}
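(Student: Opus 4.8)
The plan is to prove the lemma by lifting a rectification on the quotient to a two-dimensional Freiman model of $A$ itself. First I would unwind the hypothesis in concrete terms: the proper subgroups of $\C_l\cong\Z/l\Z$ are exactly the subgroups $d\Z/l\Z$ with $d$ a divisor of $l$ satisfying $d\ge 2$; for such a subgroup the quotient $\C_l/(d\Z/l\Z)$ is canonically isomorphic to $\C_d$, and the composite homomorphism $\Z\to\C_l\to\C_l/(d\Z/l\Z)$ is precisely the reduction $\phi_d$ modulo $d$. Thus the hypothesis amounts to the existence of a divisor $d\ge 2$ of $l$ for which $\phi_d(A)\seq\C_d$ is rectifiable; fix such a $d$ together with a Freiman isomorphism $\rho\colon\phi_d(A)\to\Z$.

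Next I would set $\Psi\colon A\to\Z^2$, $\Psi(a)=\bigl(a,\rho(\phi_d(a))\bigr)$, and verify that $\Psi$ is a Freiman isomorphism of $A$ onto $\Psi(A)$. Injectivity, and the implication ``$\Psi(a_1)+\Psi(a_2)=\Psi(a_3)+\Psi(a_4)\Rightarrow a_1+a_2=a_3+a_4$'', are read off from the first coordinate. For the converse implication, $a_1+a_2=a_3+a_4$ in $\Z$ yields $\phi_d(a_1)+\phi_d(a_2)=\phi_d(a_3)+\phi_d(a_4)$ in $\C_d$, and applying the Freiman homomorphism $\rho$ gives the matching relation among the second coordinates, while the first coordinates match trivially. (This verification is order-independent, so it applies whichever order of Freiman isomorphism is used to define $\dim$.) Consequently $\dim(A)\ge 2$ as soon as one knows that $\Psi(A)$ is not contained in an affine line of $\Z^2$.

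Establishing this last non-degeneracy is the only real point, and it is where all the hypotheses enter. Since $\{0,l\}\seq A$ and $\gcd(A)=1$, the set $A$ cannot equal $\{0,l\}$ (that would force $l=\gcd(A)=1$, in which case $\C_l$ is trivial and the statement is vacuous), so $|A|\ge 3$; hence the first coordinates of $\Psi(A)$ are not all equal, and any line through $\Psi(A)$ has the form $y=\alpha x+\beta$, i.e.\ $\rho(\phi_d(a))=\alpha a+\beta$ for all $a\in A$. Now $d\mid l$ and $\{0,l\}\seq A$ give $\phi_d(0)=\phi_d(l)=0$ with $0\ne l$, so $\phi_d$ — and therefore $\rho\circ\phi_d$ — is not injective on $A$; substituting two points with equal image into $\rho(\phi_d(a))=\alpha a+\beta$ forces $\alpha=0$. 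Then $\rho\circ\phi_d$ is constant on $A$, so $\phi_d$ is constant on $A$ (as $\rho$ is injective), so $\phi_d(A)=\{\phi_d(0)\}=\{0\}$ and $A\seq d\Z$, contradicting $\gcd(A)=1$ since $d\ge 2$. This contradiction shows that $\Psi(A)$ affinely spans $\Z^2$, completing the proof. The main thing to be careful about is precisely this step — ensuring that the natural planar model does not secretly collapse onto a line — and the key mechanism is that $\{0,l\}\seq A$ together with $d\mid l$ makes the reduction $\phi_d$ non-injective on $A$.
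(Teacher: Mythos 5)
Your proof is correct and follows essentially the same route as the paper's: you build the same planar Freiman model $a\mapsto\bigl(a,\rho(\phi_d(a))\bigr)$ and rule out collinearity via the same key observation that $\phi_d(0)=\phi_d(l)$ forces the second coordinate to be constant, which contradicts $\gcd(A)=1$ once $d\ge2$. The only difference is that you spell out the routine verifications (that $\Psi$ is a Freiman isomorphism, the edge case $|A|=2$) which the paper compresses into ``easily seen.''
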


Since the proof is just several lines long, we reproduce it here for the
convenience of the reader.
\begin{proof}
Writing $m:=l/|H|$, we identify the quotient group $\C_l/H$ with the group
$\C_m$, and the map $\Z\to\C_l\to\C_l/H$ with $\phi_m$. Let
$f\colon\phi_m(A)\to\Z$ be Freiman's isomorphism of $\phi_m(A)$ into the
integers. The set $\{(a,f(\phi_m(a)))\colon a\in A\}\seq\Z^2$ is easily seen
to be isomorphic to $A$, and to complete the proof we show that this set is
not contained in a line. Assuming the opposite, from
$f(\phi_m(0))=f(\phi_m(l))$ we derive that $f(\phi_m(a))$ attains the same
value for all $a\in A$. The same is then true for $\phi_m(a)$, showing that
$\phi_m(a)=\phi_m(0)=0$ for any $a\in A$; that is, all elements of $A$ are
divisible by $m$, contradicting the assumption $\gcd(A)=1$, except if $m=1$
in which case $H=\C_l$.
\end{proof}

From Theorem~\reft{94} and Lemma~\refl{higherdim} we deduce the key
proposition used in the proof of Theorem~\reft{main}.

\begin{proposition} \label{p:4k-8_partial}
Let $A$ be a finite set of integers satisfying
$|2A|<\frac{13}4\,|A|-\frac94$. If $\dim(A)=1$, then $A$ is contained in an
arithmetic progression with at most $2\cdot10^5|A|$ terms.
\end{proposition}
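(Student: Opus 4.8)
The plan is to normalize $A$, transfer the problem to the cyclic group $\C_l$, and apply Theorem~\reft{94}. Since a Freiman isomorphism changes neither the dimension nor the doubling, after an affine transformation I may assume that $\{0,l\}\seq A\seq[0,l]$ with $\gcd(A)=1$, where $l=\max A$. If $l\le1$ then $A=\{0,1\}$ and the assertion is trivial, so assume $l\ge2$, which forces $|A|\ge3$. As $\gcd(A)=1$, every arithmetic progression containing $A$ has difference $1$ and at least $l+1$ terms, so it suffices to prove $l+1\le2\cdot10^5|A|$.

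Set $\cA:=\phi_l(A)\seq\C_l$. The only coincidence among the images is $\phi_l(0)=\phi_l(l)=0$, so $|\cA|=|A|-1$, and $\langle\cA\rangle=\C_l$ since $\gcd(A)=1$. The crucial observation is $|2\cA|\le|2A|-|A|$: the fibre of $\phi_l$ over $0$ meets $[0,2l]$ in $\{0,l,2l\}\seq2A$, and for each of the $|A|-2$ elements $a\in A\cap(0,l)$ the two-element fibre $\{a,a+l\}$ lies in $2A$, so reduction modulo $l$ identifies at least $2+(|A|-2)=|A|$ excess elements of $2A$. Together with the hypothesis this gives $|2\cA|\le|2A|-|A|<\frac94(|A|-1)=\frac94|\cA|$ --- exactly the threshold in Theorem~\reft{94} --- and likewise $|2\cA|-|\cA|<\frac54|\cA|$. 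I would then apply Theorem~\reft{94} to $\cA$ and show that each of its three conclusions yields either $l+1\le2\cdot10^5|A|$ or, contrary to the hypothesis, $\dim(A)\ge2$.

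Conclusion~(i) is immediate: since $0\in\cA$ and $\langle\cA\rangle=\C_l$, the subgroup whose coset contains $\cA$ must be $\C_l$ itself, so $2\cdot10^5|\cA|>|\C_l|=l$. For conclusion~(ii), set $m:=l/|\cH|$ and identify $\C_l/\cH$ with $\C_m$, so that $\Z\to\C_l\to\C_l/\cH$ becomes $\phi_m$. The equality $|\cP+\cH|=|\cP||\cH|$ means $\cP$ maps to a progression $\cP'\seq\C_m$ of the same size $|\cP|$, and using that $0\in\cA\seq\cP+\cH$ generates $\C_l$ one checks that the common difference of $\cP'$ generates $\C_m$, hence has order $m$. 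If $m\ge2|\cP|-1$, this order bound forces $\cP'$, and therefore its subset $\phi_m(A)$, to be rectifiable, so Lemma~\refl{higherdim} gives $\dim(A)\ge2$; thus in fact $m\le2(|\cP|-1)$, whence $l=m|\cH|\le2(|\cP|-1)|\cH|\le2\bigl(|2\cA|-|\cA|\bigr)<\frac52|A|$, so certainly $l+1\le2\cdot10^5|A|$.

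The step I expect to be the main obstacle is conclusion~(iii), since a three-element subset of $\C_m$ that is not a progression need not be rectifiable, so Lemma~\refl{higherdim} does not apply to it directly. With $m$ and $\phi_m$ as above, $B:=\phi_m(A)$ is a three-element set containing $0$, generating $\C_m$, and not a progression. If $B$ is rectifiable, Lemma~\refl{higherdim} already contradicts $\dim(A)=1$. Otherwise $|B+B|\le5$, and examining the possible coincidences among the six sums of $B$ shows that, $B$ not being a progression, one of the three differences of $B$ has order $2$ in $\C_m$; in particular $m$ is even and (as $|B|=3$) $m\ge4$. I would then replace $\cH$ by the subgroup $\cH'\le\C_l$ of order $2|\cH|$, which is proper since $m\ge4$: the image of $A$ in $\C_l/\cH'\cong\C_{m/2}$ is a two-element set $\{0,t\}$, and $t$ cannot have order $2$ there --- that would mean $t=m/4$, which would make $B$ a progression --- so $\{0,t\}$ is rectifiable and Lemma~\refl{higherdim} again contradicts $\dim(A)=1$. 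Hence conclusion~(iii) cannot occur, so we are in case~(i) or~(ii) and $l+1\le2\cdot10^5|A|$ follows. The points requiring care are the exact inequality $|2\cA|\le|2A|-|A|$ (which is what makes the $\frac{13}{4}|A|-\frac94$ hypothesis match the $\frac94$-threshold of Theorem~\reft{94}) and the rectifiability bookkeeping for $\cP'$ in~(ii) and for $B$ in~(iii).
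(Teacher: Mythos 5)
Your proof follows the same overall route as the paper's: normalize to $\{0,l\}\seq A\seq[0,l]$ with $\gcd(A)=1$, pass to $\cA=\phi_l(A)\seq\C_l$ and verify $|2\cA|\le|2A|-|A|<\frac94|\cA|$, then split on the three alternatives of Theorem~\reft{94}, handling cases (ii) and (iii) through Lemma~\refl{higherdim}. Your treatment of cases (i) and (ii) is in substance the paper's, merely phrased without the opening \emph{reductio} on the size of $l$: you bound $l$ directly where the paper assumes $l\ge2\cdot10^5|A|$ and derives $|\cP|<\tfrac12|\C_l/\cH|$.

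Where you genuinely diverge --- and in fact tighten the argument --- is case (iii). The paper asserts that because the three-element image $B=\phi_m(A)\seq\C_m$ is not a progression, it is automatically Freiman-isomorphic to $\{0,1,3\}\seq\Z$, i.e.\ rectifiable, and then invokes Lemma~\refl{higherdim}. That inference does not hold in general: the set $\{0,1,3\}\seq\C_6$ is not a three-term progression, yet $3+3\equiv0$ collapses its sumset to five elements, so it is not rectifiable. You correctly isolate the only possible obstruction --- one of the three differences of $B$ having order $2$ in $\C_m$ --- and repair the argument by enlarging $\cH$ to the subgroup $\cH'$ of order $2|\cH|$ (proper since $m\ge4$). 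The resulting two-element image $\{0,t\}$ in $\C_{m/2}$ cannot have $t$ of order $2$, since $t=m/4$ would force $B$ to be a progression after all; hence $\{0,t\}$ is rectifiable and Lemma~\refl{higherdim} applies. This closes a small but real gap in the paper's handling of case (iii); the rest of your argument matches the paper's.
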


The proof essentially follows that of \cite[Proposition~2.3]{b:CSS20}, with
some simplifications, and with Theorem~\reft{94}
replacing~\cite[Theorem~1]{b:DF03}.
\begin{proof}[Proof of Proposition~\refp{4k-8_partial}]
Without loss of generality we assume that $\{0,l\}\seq A\seq[0,l]$ with an
integer $l>0$, and that $\gcd(A)=1$. We want to show that $l<2\cdot10^5|A|$.

Aiming at a contradiction, assume that $l\ge2\cdot10^5|A|$. Let
$\cA:=\phi_l(A)\seq\C_l$; thus, $|\cA|=|A|-1$. Since $\phi_l(a)=\phi_l(a+l)$
for any $a\in A\stm\{0,l\}$, and $\phi_l(0)=\phi_l(l)=\phi_l(2l)$, we have
$|2A|\ge|2\cA|+|A|$. It follows that
  $$ |2\cA| \le |2A|-|A| < \frac94\,|A| - \frac94 = \frac94\,|\cA|, $$
allowing us to apply Theorem~\reft{94}. We consider three possible cases
corresponding to the three cases in the conclusion of the theorem.

\medskip\noindent
Case (i): There is a subgroup $\cH\le\C_l$ such that $\cA$ is contained in an
$\cH$-coset and $|\cA|>C^{-1}|\cH|$, where $C=2\cdot10^5$. Since $\gcd(A)=1$,
the subgroup $\cH$ is not proper. Therefore
$|\cH|=l<2\cdot10^5|\cA|<2\cdot10^5|A|$, as wanted.

\medskip\noindent
Case (ii): There is a proper subgroup $\cH<\C_l$ and an arithmetic
progression $\cP\seq\C_l$ of size $|\cP|>1$ such that $|\cP+\cH|=|\cP||\cH|$,
$\cA\seq \cP+\cH$, and $(|\cP|-1)|\cH|\le|2\cA|-|\cA|$. The image of $\cA$
under the quotient map $\C_l\to\C_l/\cH$ is contained in an arithmetic
progression of size
  $$ |\cP| \le 1+(|2\cA|-|\cA|)/|\cH| \le 1+\frac54\,|\cA|/|\cH|
              < \frac54\,|A|/|\cH| < \frac12\,l/|\cH| = \frac12\,|\C_l/\cH|. $$
The difference of this progression is coprime with $|\C_l/\cH|$ in view of
the assumption $\gcd(A)=1$. Hence, the progression is rectifiable, and so is
the image of $A$ contained therein. The result now follows by applying
Lemma~\refl{higherdim}.

\medskip\noindent
Case (iii): There is a proper subgroup $\cH<\C_l$ such that $\cA$ meets
exactly three $\cH$-cosets, the cosets are not in an arithmetic progression,
and $3|\cH|\le |2\cA|-|\cA|$. In this case the image of $A$ in $\C_l/\cH$
consists of three elements not in an arithmetic progression; therefore the
image is isomorphic, say, to the set $\{0,1,3\}\seq\Z$, and an application of
Lemma~\refl{higherdim} completes the proof.
\end{proof}

\begin{lemma}[Freiman~{\cite[Lemma 1.14]{b:Freiman73}}]%
  \label{l:dimension_min}
For any finite, nonempty set $A$ of integers, writing $d:=\dim(A)$, we have
  $$ |2A| \ge (d+1)|A| - \binom{d+1}{2}. $$
\end{lemma}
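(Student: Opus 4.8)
The plan is to reduce, via Freiman isomorphism, to a statement about lattice polytopes and then run a double induction. By the definition of $d=\dim(A)$, the set $A$ is Freiman-isomorphic to a finite set $A'\seq\Z^d$ whose affine hull is all of $\mathbb{R}^d$; since a Freiman isomorphism preserves the cardinalities of a set and of its sumset, it suffices to prove the inequality for $A'$. So assume $A\seq\Z^d$ is finite with full-dimensional affine hull, and argue by induction on $d$, with a secondary induction on $|A|$ for each fixed $d$. The case $d\le1$ is classical: a $0$-dimensional set is a single point, while for $A\seq\Z$ with elements $a_1<\dots<a_n$ the $2n-1$ sums $2a_1<a_1+a_2<\dots<a_1+a_n<a_2+a_n<\dots<2a_n$ are distinct, so $|2A|\ge2|A|-1$. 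The base of the secondary induction is $|A|=d+1$: then $A$ is affinely independent, hence its $\binom{d+1}2+(d+1)=\binom{d+2}2$ pairwise sums $a_i+a_j$ with $i\le j$ are pairwise distinct, and $\binom{d+2}2=(d+1)^2-\binom{d+1}2$.

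For the inductive step assume $d\ge2$ and $|A|\ge d+2$, and fix any vertex $v$ of the polytope $P:=\mathrm{conv}(A)$. Deleting one point from a full-dimensional finite set cannot lower its affine dimension by more than one, so $\dim(A\stm\{v\})$ is $d-1$ or $d$. Suppose first it is $d-1$; then $A\stm\{v\}$ lies in an affine hyperplane $\Pi$ with $v\notin\Pi$, so $P$ is a pyramid with apex $v$. Writing $B:=A\stm\{v\}$ and invoking an affine functional constant on $\Pi$, one checks that the sets $2B$, $v+B$ and $\{2v\}$ lie on three distinct level sets of that functional and are therefore disjoint, whence $|2A|=|2B|+|B|+1$. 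Feeding in the dimension-$(d-1)$ inductive bound $|2B|\ge d|B|-\binom d2$ together with $|B|=|A|-1$ and the identity $\binom{d+1}2=\binom d2+d$ yields $|2A|\ge(d+1)|A|-\binom{d+1}2$.

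Now suppose $\dim(A\stm\{v\})=d$. The secondary induction gives $|2(A\stm\{v\})|\ge(d+1)(|A|-1)-\binom{d+1}2$, so it suffices to produce $d+1$ distinct elements of $2A$ lying outside $2(A\stm\{v\})$. Translate $v$ to the origin. One such element is $0=2v$: since $v$ is a vertex, some linear functional is positive on $A\stm\{0\}$, making $0$ impossible to write as a sum of two nonzero elements of $A$. For the remaining $d$, use the standard fact that a vertex of a $d$-dimensional polytope lies on at least $d$ edges whose direction vectors span $\mathbb{R}^d$: pick such edges $[0,u_1]\longc[0,u_d]$ with $u_1\longc u_d$ linearly independent, and let $w_i\in A$ be the point of $A$ on the segment $[0,u_i]$ nearest to $0$ among those different from $0$. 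As $[0,u_i]$ is a face of $P$, there is a linear functional $\psi_i\ge0$ on $P$ vanishing on $A$ exactly along $[0,u_i]$; so a representation $w_i=b+c$ with $b,c\in A\stm\{0\}$ would force $\psi_i(b)=\psi_i(c)=0$, hence $b,c\in[0,u_i]$, and then whichever of $b,c$ is nearer to $0$ would be a point of $A$ on $[0,u_i]$ strictly between $0$ and $w_i$, contradicting the choice of $w_i$. Thus $w_1\longc w_d\notin2(A\stm\{0\})$, they are nonzero and pairwise distinct (lying on distinct rays), and $\{0,w_1\longc w_d\}$ are the desired $d+1$ new elements, giving $|2A|\ge|2(A\stm\{v\})|+(d+1)\ge(d+1)|A|-\binom{d+1}2$.

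The step I expect to be the crux is this last case — specifically, proving that the elements $v+w_i$ are genuinely new. It relies on choosing exactly the right point $w_i$ (the $A$-point closest to $v$ along an edge of $P$, not merely the far endpoint of the edge) and on the supporting functional of that edge, together with the polytope-theoretic input that the edge directions at a vertex of a $d$-polytope span $\mathbb{R}^d$. Everything else amounts to bookkeeping with the binomial identities above.
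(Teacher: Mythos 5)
The paper does not prove this lemma; it is cited verbatim from Freiman's 1973 monograph (Lemma~1.14), so there is no in-paper argument to compare against. Your proof is, however, correct and self-contained, and it follows what is by now the standard convex-hull route (essentially the Freiman--Ruzsa argument): reduce by a Freiman isomorphism to a full-dimensional $A\seq\Z^d$, then induct by deleting a vertex $v$ of $\mathrm{conv}(A)$, splitting into the pyramid case ($\dim(A\stm\{v\})=d-1$, where $2A$ is the disjoint union $2B\cup(v+B)\cup\{2v\}$ and the arithmetic closes exactly via $\binom{d+1}{2}=\binom d2+d$) and the full-dimensional case ($\dim(A\stm\{v\})=d$, where the $d$ edges at $v$ together with $2v$ supply $d+1$ new sums). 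The edge/supporting-functional argument showing $w_i\notin 2(A\stm\{v\})$, including taking the \emph{nearest} lattice point of $A$ on each edge rather than the far endpoint, is correct, and so is the check that $0,w_1\longc w_d$ are distinct. One point you gloss over but which is routine: in the pyramid case you invoke the dimension-$(d-1)$ hypothesis for $B\seq\Pi\cap\Z^d$, which requires identifying $\Pi\cap\Z^d$ (a rational hyperplane through lattice points) with $\Z^{d-1}$ via a lattice isomorphism — a Freiman isomorphism of every order — so that $B$ really is a full-dimensional subset of $\Z^{d-1}$; equivalently, you could phrase the whole thing as a single strong induction on $|A|$ over finite subsets of arbitrary $\Z^n$ with ``affine dimension'' in place of the ambient $d$, which removes the need for the primary induction on $d$ altogether. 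Either way the proof is sound.
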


\begin{lemma}[Candela-Serra-Spiegel~{\cite[Corollary~2.6]{b:CSS20}}]%
  \label{l:Freiman_2lines}
Let $A\seq\Z$ be a finite set with $\dim A=2$. If $|2A|\le\frac{10}{3}\,
|A|-7$, then $A$ is contained in the union of two arithmetic progressions,
$P_1$ and $P_2$, with the same difference, such that $|P_1\cup P_2|\le
|2A|-2|A|+3$ and the sumsets $2P_1$, $P_1+P_2$ and $2P_2$ are pairwise
disjoint.
\end{lemma}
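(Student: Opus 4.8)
The plan is to reduce to the case where $A$ is covered by two parallel arithmetic progressions of a common difference $d$, and then to extract the size estimate and the disjointness of $2P_1,P_1+P_2,2P_2$ from one–dimensional $(3k-4)$–type results applied to the two pieces of $A$.

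For the structural step I would invoke a Freiman–type theorem for sets with $\dim A=2$: such an $A$ lies in a proper two–dimensional generalized arithmetic progression, i.e.\ for suitable $d,d'$ it is contained in a union of $t$ parallel progressions of difference $d$ (the ``rows''), with $t\ge 2$ (since $\dim A=2$ keeps $A$ out of any single progression) and with the total length of the rows bounded linearly in $|2A|$ and $|A|$; one may either quote a suitable version of this or assemble it from Freiman's two–dimensional theory. The crucial point — and, I expect, the main obstacle — is that the hypothesis $|2A|\le\frac{10}{3}|A|-7$ forces $t=2$; equivalently, that a dimension–$2$ set needing at least three rows satisfies $|2A|>\frac{10}{3}|A|-7$. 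The constant is essentially sharp here: $\{0,1,2\}^2$ has dimension $2$, needs three rows, and has $|2A|=25>23=\frac{10}{3}\cdot 9-7$, while large triangles need many rows and have doubling ratio tending to $4$. So this step carries the real weight; everything after it is one–dimensional bookkeeping.

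Granting two rows, let $d$ be their difference and split $A=A_1\sqcup A_2$ according to the two residues $r_1\ne r_2$ modulo $d$ that meet $A$. The three blocks $2A_1,A_1+A_2,2A_2$ lie in the residues $2r_1,r_1+r_2,2r_2$ modulo $d$, which are pairwise distinct: $2r_1\equiv 2r_2\pmod d$ would give $r_1\equiv r_2\pmod{d/2}$, putting $A$ in a single progression of difference $d/2$ and forcing $\dim A=1$. Hence these blocks are disjoint — so $2P_1,P_1+P_2,2P_2$ will be disjoint as well — and, writing $a_i:=|2A_i|-2|A_i|+1\ge 0$ and $r:=|A_1+A_2|-|A_1|-|A_2|+1\ge 0$, one gets the identity
$$ |2A|=|2A_1|+|A_1+A_2|+|2A_2|=3|A|-3+a_1+a_2+r. $$
In particular (also using Lemma~\refl{dimension_min}, which gives $|2A|\ge 3|A|-3$) the hypothesis reads $a_1+a_2+r\le\frac13|A|-4$, and in the same variables the asserted bound $|P_1\cup P_2|\le|2A|-2|A|+3$ becomes $|P_1|+|P_2|\le|A_1|+|A_2|+a_1+a_2+r$.

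It remains to produce progressions $P_1,P_2$ of a common difference witnessing this. Taking $A_2$ to be the larger piece, $|A_1+A_2|\ge|A|-1$ and $|2A_i|\ge 2|A_i|-1$ give $|2A_2|\le 3|A_2|-4$, so by Freiman's $(3k-4)$–theorem $A_2$ sits in a progression $P_2$ of difference $d$ with $|P_2|=|A_2|+a_2$ terms. For $A_1$ one applies the $(3k-4)$–theorem for sums of two sets to the pair $(A_1,A_2)$ — valid once $r$ is in its range, which the bound $a_1+a_2+r\le\frac13|A|-4$ secures, the degenerate possibility ($A_1$ very sparse relative to $d$) being excluded because it would make $|A_1+A_2|$, hence $|2A|$, too large — obtaining a progression $P_1$ of the same difference $d$ containing $A_1$ with $|P_1|-|A_1|$ bounded by $r$; substituting into the displayed inequality completes the proof. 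This last paragraph is routine $(3k-4)$ theory but does take some care; the genuine difficulty, as stressed above, is the reduction to two rows.
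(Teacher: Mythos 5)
The paper does not prove this lemma at all: it is quoted as a black box from Candela--Serra--Spiegel \cite[Corollary~2.6]{b:CSS20}, so there is no in-paper proof to compare against. Judged on its own terms, your proposal has genuine gaps.

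The step you yourself flag as carrying ``the real weight'' --- forcing $t=2$ from $\dim A=2$ together with $|2A|\le\frac{10}{3}|A|-7$ --- is essentially the entire content of the lemma; it is Freiman's two-dimensional $3k-4$ theorem (Theorem~1.17 of \cite{b:Freiman73}, which is also where the constant $\frac{10}{3}$ comes from), and the phrase ``one may either quote a suitable version of this or assemble it from Freiman's two-dimensional theory'' is not a proof. You cannot expect to recover the sharp constant from softer considerations.

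Beyond that, two of the steps you treat as routine do not go through as written. The disjointness argument fails when $d$ is even and $r_1-r_2\equiv d/2\pmod d$: then indeed $2r_1\equiv 2r_2\pmod d$, but your claim that $A$ lying inside a single arithmetic progression of difference $d/2$ forces $\dim A=1$ is false --- for instance $\{0,1,3\}$ lies in the difference-$1$ progression $\{0,1,2,3\}$ yet has additive dimension $2$. The disjointness in the lemma really comes for free from the two-dimensional picture (in $\Z^2$ the three sums $2P_1$, $P_1+P_2$, $2P_2$ land on three distinct lines, and the Freiman isomorphism transports this), not from residue arithmetic in $\Z$. Second, applying the two-set $(3k-4)$ theorem to $A_1+A_2$ requires $r$ to be small compared with $\min(|A_1|,|A_2|)=|A_1|$, whereas your bound $a_1+a_2+r\le\frac13|A|-4$ only gives $r\le\frac13(|A_1|+|A_2|)-4$, which can far exceed $|A_1|$ when $|A_2|\gg|A_1|$; ``which the bound secures'' is therefore unjustified. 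Finally, the one-dimensional $(3k-4)$ theorem hands you $P_1$ and $P_2$ with differences $\gcd(A_1-A_1)$ and $\gcd(A_2-A_2)$, both multiples of $d$ but not necessarily equal, so the asserted common difference also needs an argument.
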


The following result is, essentially, extracted from~\cite[Proof of
Theorem~3]{b:LS20}, with a little twist that will help us keep the remainder
terms under better control

For a prime $p$ and a subset $\cA\seq\C_p$, by $\hcA$ we denote the
non-normalized Fourier transform of the indicator function of $\cA$:
  $$ \hcA(\chi) = \sum_{a\in\cA} \chi(a);\quad \chi\in\hC. $$
The principal character is denoted by $1$. We let
  $$ \eta_\cA:=\max\{|\hcA(\chi)|/|\cA|\colon\chi\ne 1\}. $$

\begin{proposition}\label{p:rect}
Suppose that $p$ is a prime, and $\cA\seq\C_p$ is a nonempty subset of
density $\alp:=|\cA|/p<1/2$. If $|2\cA|=K|\cA|$ and $\cA$ is not an
arithmetic progression, then
  $$ (1-\alp K)(1-\eta_\cA^2)
             < 1 - K^{-1} - K^{-2} + (K - (1-2K^{-1})|\cA|)/|\cA|^2. $$
\end{proposition}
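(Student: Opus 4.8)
The plan is to exploit the standard Fourier-analytic identity for additive energy together with the trivial lower bound on the energy coming from the Cauchy--Schwarz inequality applied to the representation function of $\cA+\cA$. Write $E := \E(\cA)$ for the additive energy of $\cA$, that is, the number of quadruples $(a,b,c,d)\in\cA^4$ with $a+b=c+d$. Parseval's formula gives the two classical identities
  $ \sum_{\chi} |\hcA(\chi)|^2 = p|\cA| $
and
  $ \sum_{\chi} |\hcA(\chi)|^4 = pE, $
the sums running over all $\chi\in\hC$. First I would separate off the principal character: since $\hcA(1)=|\cA|$, the second identity becomes $pE = |\cA|^4 + \sum_{\chi\ne 1}|\hcA(\chi)|^4$, and I would bound the tail sum by $\eta_\cA^2|\cA|^2$ times $\sum_{\chi\ne 1}|\hcA(\chi)|^2 = p|\cA| - |\cA|^2$, yielding the upper bound
  $ pE \le |\cA|^4 + \eta_\cA^2 |\cA|^2 (p|\cA| - |\cA|^2). $
Dividing by $p$ and recalling $\alp = |\cA|/p$, this reads $E \le \alp|\cA|^3 + \eta_\cA^2|\cA|^2(|\cA| - \alp|\cA|^2/|\cA|)$; after tidying, $E \le \alp|\cA|^3 + \eta_\cA^2 |\cA|^3 (1 - \alp)$, i.e.\ $E/|\cA|^3 \le \alp + \eta_\cA^2(1-\alp)$.

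Next I would produce a matching \emph{lower} bound on $E$. The crude Cauchy--Schwarz bound $E \ge |\cA|^4/|2\cA| = |\cA|^3/K$ is too weak to get the claimed inequality with the sharp constants, so the ``little twist'' must come from a better count. The key extra input is that $\cA$ is \emph{not} an arithmetic progression, so by the Cauchy--Davenport/Vosper theorem (which the paper allows us to invoke freely) we have $|2\cA| \ge \min\{p, 2|\cA|+1\}$, and more to the point we can refine the Cauchy--Schwarz step: writing $r(x)$ for the number of representations $x = a+b$ with $a,b\in\cA$, we have $\sum_x r(x) = |\cA|^2$, $\sum_x r(x)^2 = E$, and the support of $r$ has size $|2\cA| = K|\cA|$. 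Since $r(x) \le |\cA|$ for every $x$ and $r$ takes the value $|\cA|$ only for at most one $x$ (as $\cA$ is not contained in a coset — here one uses $\alp < 1/2$ or that $\cA$ is not an AP to rule out the extreme configurations), I would squeeze out the correction terms by optimizing $\sum r(x)^2$ subject to $\sum r(x) = |\cA|^2$, support size $K|\cA|$, and the pointwise ceiling; the lower bound on $E$ this produces is of the shape $E \ge |\cA|^3(K^{-1} + K^{-2}) - (\text{small correction in } |\cA|)$, which is exactly what is needed to match the right-hand side of the claimed inequality after rearrangement.

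Combining the two bounds, $\alp + \eta_\cA^2(1-\alp) \ge E/|\cA|^3 \ge K^{-1} + K^{-2} - (\text{correction})/|\cA|^2$, and rearranging to isolate $(1-\alp K)(1-\eta_\cA^2)$ on the left will give the stated estimate; the algebra here is routine, the main point being to keep careful track of the lower-order terms in $|\cA|$. I expect the main obstacle to be the second step — namely, extracting the lower bound $E \gtrsim |\cA|^3(K^{-1}+K^{-2})$ with explicit and correctly-signed remainder terms. The naive energy bound only gives the $K^{-1}$ term; getting the additional $K^{-2}$ requires genuinely using that $\cA$ is not an arithmetic progression to improve the support structure of the representation function (roughly, the ``thin tails'' of $r$ near the ends of $2\cA$ cannot all be of size close to $|\cA|$), and pinning down the precise correction term $(K - (1-2K^{-1})|\cA|)/|\cA|^2$ is where the bookkeeping of~\cite[Proof of Theorem~3]{b:LS20} has to be reproduced faithfully. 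Everything else — Parseval, the definition of $\eta_\cA$, and the final rearrangement — is mechanical.
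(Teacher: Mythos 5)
There is a genuine gap, and the approach is in fact structurally different from the paper's. You set up the \emph{pure} fourth moment $\sum_\chi |\hcA(\chi)|^4 = p\,\E(\cA)$ and propose to close the argument by proving a lower bound of the shape $\E(\cA) \ge |\cA|^3(K^{-1}+K^{-2}) - \text{(correction)}$. But the optimization you sketch cannot produce the $K^{-2}$ term: minimizing $\sum_x r(x)^2$ subject to $\sum_x r(x)=|\cA|^2$, support size $K|\cA|$, and $r(x)\le|\cA|$ is achieved at the constant profile $r\equiv |\cA|/K$, giving exactly $|\cA|^3/K$ and nothing more; even adding Pollard-type constraints on the truncated moments $\sum_x\min(t,r(x))$ does not exclude this profile once $K\ge 2$, which is forced here by Cauchy--Davenport. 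The hypothesis that $\cA$ is not an arithmetic progression enters nowhere in your variational problem, and I see no mechanism by which it could force the representation function to be far from flat. So the central estimate you label as ``where the bookkeeping has to be reproduced faithfully'' is, as far as I can tell, not a true statement, and your proof does not go through.

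The paper avoids this entirely by working with the \emph{mixed} fourth moment
\[
\frac1p\sum_{\chi\in\hC}|\hcA(\chi)|^2|\hcS(\chi)|^2 = \sum_{x\in\cD}|\cA_x||\cS_x|,
\qquad \cS:=2\cA,\ \cD:=\cA-\cA,
\]
upper-bounding the left side by Parseval exactly as you do for $E$ (this is where the factor $1-\alp K$, rather than your $1-\alp$, appears, since the non-principal mass of $\hcS$ is $|\cS|(p-|\cS|)$), and lower-bounding the right side via the Katz--Koester containment $\cA+\cA_x\seq\cS_x$ together with Cauchy--Davenport/Vosper to estimate $|\cA+\cA_x|\ge|\cA|+|\cA_x|$ for $|\cA_x|\ge2$. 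The quantity $\E(\cA)=\sum_x|\cA_x|^2$ then surfaces as \emph{one summand} in the resulting lower bound, and only the elementary $\E(\cA)\ge|\cA|^3/K$ is invoked; the $K^{-2}$ piece of the final inequality comes from this term sitting alongside the main term $|\cA|^3$, not from any refined lower bound on $\E(\cA)$ itself. If you want to salvage your route, you would have to find an independent proof of your energy lower bound; otherwise, switching to the mixed fourth moment and Katz--Koester is the move you are missing.
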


\begin{proof}
Let $\cS:=2\cA$ and $\cD:=\cA-\cA$. For a set $\cT\seq\C_p$ and element
$x\in\C_p$, we write $\cT_x:=\cT\cap(x+\cT)$; thus, $|\cT_x|$ is the number
of representations of $x$ as a difference of two elements of $\cT$, and in
particular $|\cT_0|=|\cT|$.

Consider the easily-verified identity
\begin{equation}\label{e:EVI}
  \frac1p\,\sum_{\chi\in\hC} |\hcA(\chi)|^2|\hcS(\chi)|^2
                                           = \sum_{x\in \cD} |\cA_x||\cS_x|.
\end{equation}
For the left-hand side using the Parseval identity we obtain the estimate
\begin{align}
  \frac1p\,\sum_{\chi\in\hC} |\hcA(\chi)|^2|\hcS(\chi)|^2
     &\le \frac1p\,|\cA|^2|\cS|^2 + \frac1p\,\eta_\cA^2|\cA|^2|\cS|(p-|\cS|) \notag \\
     &\le \alp K^2|\cA|^3 + \eta_\cA^2 K|\cA|^3(1-\alp K). \label{e:LHS}
\end{align}
To estimate the right-hand side we recall the \emph{Katz-Koester observation}
$\cA+\cA_x\seq\cS_x,\ x\in \C_p$. Let $N$ be the number of elements $x\in
\cD$ with $|\cA_x|=1$. Notice that $N\le|\cD|\le K^2|\cA|$; here the first
estimate is trivial, and the second is the Pl\"{u}nnecke–Ruzsa inequality.
From the assumption $\alp<1/2$ and the theorems of Cauchy-Davenport and
Vosper, we get
\begin{align}
   \sum_{x\in \cD} |\cA_x||\cS_x|
     &\ge \sum_{x\in \cD\stm\{0\}} |\cA_x||\cS_x| + |\cA||\cS|\notag \\
     &\ge \sum_{x\in \cD\stm\{0\}} |\cA_x||\cA+\cA_x| + |\cA||\cS|\notag \\
     &\ge \sum_{x\in \cD\stm\{0\}} |\cA_x|(|\cA|+|\cA_x|) - N + |\cA||\cS|  \notag \\
     &\ge \sum_{x\in \cD} |\cA_x|(|\cA|+|\cA_x|) - N +|\cA||\cS| - 2|\cA|^2  \notag \\
     &\ge |\cA|^3 + \E(\cA) - K^2|\cA| + (K-2)|\cA|^2 \label{e:RHS}
\end{align}
where $\E(\cA)=\sum_{x\in\cD}|\cA_x|^2$ is the additive energy of $\cA$, and
where the third estimate follows from Vosper's theorem if $|A+A_x|\le p-2$,
and otherwise from $|\cA+\cA_x|\ge p-1>2\alp p-1=2|\cA|-1\ge|\cA|+|\cA_x|-1$.

Combining~\refe{EVI}, \refe{LHS}, and \refe{RHS}, and using the basic bound
$\E(\cA)\ge|\cA|^3/K$, we get
  $$ \alp K^2|\cA|^3 + \eta_\cA^2 K|\cA|^3(1-\alp K) \ge (1+K^{-1})|\cA|^3
                                        - (K^2-(K-2)|\cA|)|\cA| $$
whence
\begin{gather*}
  \alp K + \eta_\cA^2 (1-\alp K)
                   \ge K^{-1}+K^{-2} - (K - (1-2K^{-1})|\cA|)/|\cA|^2, \\
  (\eta_\cA^2-1)(1-\alp K) \ge K^{-1}+K^{-2}-1
                                - (K - (1-2K^{-1})|\cA|)/|\cA|^2
\end{gather*}
which is equivalent to the inequality sought.
\end{proof}

\begin{corollary}\label{c:bias}
Let $\cA$, $\alp$, and $K$ be as in Proposition~\refp{rect}. If
$\alp<10^{-5}$, $K<2.7652$, and $|\cA|\ge10$, then
$\eta_\cA>\frac8{13}\,K-1$.
\end{corollary}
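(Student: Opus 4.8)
The plan is to invoke Proposition~\refp{rect} and extract a lower bound on $\eta_\cA$ from the displayed inequality, using the hypotheses $\alp<10^{-5}$, $K<2.7652$, and $|\cA|\ge10$ to control all the error terms. First I would observe that since $K<2.7652<9/4$ fails—wait, actually $K<2.7652$ does not give $K<9/4$, but Proposition~\refp{rect} has no such restriction; it only requires $\alp<1/2$ and that $\cA$ is not an arithmetic progression, so before anything else I need to dispose of the case where $\cA$ \emph{is} an arithmetic progression. If $\cA$ is a progression then, since $\alp<1/2$, one has $|2\cA|=2|\cA|-1$, i.e. $K=2-1/|\cA|$, and then $\frac8{13}K-1=\frac{16}{13}-\frac{8}{13|\cA|}-1=\frac3{13}-\frac8{13|\cA|}$; meanwhile for a progression $\eta_\cA$ can be made small, so the claimed bound would be \emph{false}. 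Hence I expect the corollary is really being applied in a context where $\cA$ is known not to be a progression, and the statement should be read with that standing assumption inherited from the proposition; I would simply note that Proposition~\refp{rect} applies and proceed.

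Granting that, the main computation is to bound the right-hand side of Proposition~\refp{rect},
$$ R := 1 - K^{-1} - K^{-2} + \frac{K - (1-2K^{-1})|\cA|}{|\cA|^2}, $$
from above, and to bound $1-\eta_\cA^2$ from below via $1-\alp K$. I would write the proposition's inequality as $(1-\alp K)(1-\eta_\cA^2)<R$, hence $1-\eta_\cA^2 < R/(1-\alp K)$, so $\eta_\cA^2 > 1 - R/(1-\alp K)$. The term $(K-(1-2K^{-1})|\cA|)/|\cA|^2$ is, for $|\cA|\ge10$ and $K<2.7652$, negative and small: it equals $K/|\cA|^2 - (1-2K^{-1})/|\cA|$, and since $1-2K^{-1}>1-2/2 = 0$ is bounded below by a positive constant (using $K$ is also bounded below, say by the trivial $K\ge 2-1/|\cA|\ge 19/10$ for a non-progression — actually for a non-progression in a prime-order group Cauchy--Davenport/Vosper gives $|2\cA|\ge 2|\cA|$, so $K\ge 2$), this term is at most $K/100$ in absolute value and one checks it is negative, so it only helps. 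Thus $R \le 1 - K^{-1} - K^{-2} + (\text{small negative})\le 1-K^{-1}-K^{-2}$. Also $1-\alp K > 1 - 10^{-5}\cdot 2.7652 > 1 - 3\cdot10^{-5}$.

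Putting these together, $\eta_\cA^2 > 1 - (1-K^{-1}-K^{-2})/(1-\alp K) \ge 1 - (1-K^{-1}-K^{-2})(1+2\alp K) = K^{-1}+K^{-2} - 2\alp K(1-K^{-1}-K^{-2})$, and the error $2\alp K(\cdots) < 2\cdot10^{-5}\cdot2.7652 < 6\cdot10^{-5}$. So it remains to check the purely elementary inequality
$$ K^{-1}+K^{-2} - \varepsilon_0 > \Big(\tfrac8{13}K-1\Big)^2 = \tfrac{64}{169}K^2 - \tfrac{16}{13}K + 1 $$
for all $2\le K<2.7652$ with a tiny slack $\varepsilon_0\approx 10^{-4}$ absorbed. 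Multiplying by $K^2$, this is a quartic inequality in $K$; I would verify that at $K=2.7652$ the left side (ignoring $\varepsilon_0$) strictly exceeds the right side and that the relevant difference $g(K):=K+1 - (\tfrac{64}{169}K^2-\tfrac{16}{13}K+1)K^2$ is positive on $[2,2.7652]$, which is a one-variable calculus check (the function is a quartic, decreasing on the interval of interest, positive at the right endpoint). The main obstacle is bookkeeping: making sure every error term ($K/|\cA|^2$, the $\alp K$ factor, the additive-energy slack already built into $R$) is pushed in the favorable direction and that the numeric margin at $K=2.7652$ is genuinely positive rather than marginal — I expect the margin to be comfortable (on the order of $10^{-2}$ in $g(K)$), so the $10^{-4}$-size errors are harmless, but this is exactly the point that must be checked rather than waved through.
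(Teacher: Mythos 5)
Your high-level strategy — assume $\eta_\cA\le\frac8{13}K-1$, so $1-\eta_\cA^2\ge 1-(\frac8{13}K-1)^2$, feed this into Proposition~\ref{p:rect}, and derive a numerical contradiction — is exactly what the paper does. But your execution has two genuine gaps.

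First, the sign claim. You assert that $(K-(1-2K^{-1})|\cA|)/|\cA|^2 = K/|\cA|^2 - (1-2K^{-1})/|\cA|$ is negative for $|\cA|\ge 10$ and that $1-2K^{-1}$ is ``bounded below by a positive constant.'' Neither is true: at $K=2$ one has $1-2K^{-1}=0$, so the term equals $K/|\cA|^2>0$, and more generally at $|\cA|=10$ the term stays positive for all $K$ up to roughly $5-\sqrt5\approx 2.764$, which is essentially the entire range. So the ``only helps'' step is false as stated. Second, the margin estimate. You predict $g(K)$ is of order $10^{-2}$ near the endpoint, but $K^{-1}+K^{-2}-(\frac8{13}K-1)^2$ at $K=2.7652$ is about $9\cdot10^{-5}$ (equivalently $g(2.7652)\approx 7\cdot10^{-4}$ after multiplying by $K^2$). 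The constant $2.7652$ is chosen so that this margin is just barely positive, so $10^{-4}$-sized error terms are \emph{not} automatically harmless — you would have to check that they cancel favourably. They do, but only because the $|\cA|$-correction turns out to be near zero or slightly negative precisely in the narrow window where the margin is razor-thin, while for $K$ near $2$, where the correction is positive ($\approx 0.02$), the margin is huge ($\approx 0.7$). Your sketch does not verify this coincidence, and cannot, because the bound ``at most $K/100$ in absolute value and negative'' is simply wrong.

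The paper sidesteps this entirely: it keeps $|\cA|$ as the free variable and uses the monotonicity of the two sides of the inequality in $K$ and $\alp$ to justify plugging in $K=2.7652$ and $\alp=10^{-5}$, leaving a concrete quadratic inequality in $|\cA|$ that is checked to fail for all $|\cA|\ge 10$. That route automatically tracks the $|\cA|$-dependence instead of discarding it, which is exactly what is needed given how tight the numerics are. (Your opening digression about $K<9/4$ and the progression case is unnecessary: the corollary explicitly inherits the hypothesis of Proposition~\ref{p:rect} that $\cA$ is not a progression, and that proposition has no $K<9/4$ requirement.)
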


\begin{proof}
Assuming $\eta_\cA\le\frac8{13}\,K-1$ we get
  $$ 1-\eta_\cA^2\ge\frac{16}{13}\,K-\frac{64}{169}\,K^2
                             = \frac{16}{169}K(13-4K) $$
whence
\begin{equation}\label{e:KAalp}
   (1-\alp K)\frac{16}{169}K(13-4K)
                      < 1 - K^{-1} - K^{-2} + (K - (2-K^{-1})|\cA|)/|\cA|^2.
\end{equation}
The left-hand side is decreasing both as a function of $K$ and a function of
$\alp$, the right-hand side is an increasing function of $K$.
Therefore~\refe{KAalp} stays true with $K$ substituted by $2.7652$ and $\alp$
by $10^{-5}$; this results in a quadratic inequality in $|\cA|$ which is
false for $|\cA|\ge10$.
\end{proof}

The following lemma is standardly used to convert the ``Fourier bias''
(established in Corollary~\refc{bias}) into the ``combinatorial bias''.
\begin{lemma}[Freiman~\refb{Freiman73}]\label{l:arcs}
Suppose that $p$ is a prime, and $\cA\seq\C_p$ is a nonempty subset. There is
an arithmetic progression $\cP\sbs\C_p$ with $|\cP|\le (p+1)/2$ terms such
that
  $$ |\cA\cap\cP| > \frac12\,(1+\eta_\cA)|\cA|. $$
\end{lemma}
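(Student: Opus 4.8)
The plan is to produce $\cP$ by averaging the count $|\cA\cap\cP|$ over a one-parameter family of ``half-circle'' progressions against a cleverly chosen probability weight, arranged so that all error terms cancel and the value $\tfrac12(1+\eta_\cA)|\cA|$ comes out on the nose.

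First I would normalize. Pick $\chi_0\ne1$ with $|\hcA(\chi_0)|=\eta_\cA|\cA|$; since $p$ is prime, $\chi_0(x)=e^{2\pi irx/p}$ for some unit $r$, so replacing $\cA$ by the dilate $r\cA$ — which preserves $\eta_\cA$ and carries progressions to progressions of equal size — lets me assume $\hcA(\chi_0)=\sum_{a\in\cA}e^{2\pi ia/p}=:\eta_\cA|\cA|\,e^{2\pi i\tau}$ with $\tau$ real (the case $\cA=\C_p$, where $\eta_\cA=0$, being trivial). For real $\theta$ let $\cP(\theta)\seq\C_p$ consist of the residues $j$ with $j/p$ lying in the arc $(\theta,\theta+\tfrac12)$ modulo $1$; this is an arithmetic progression of difference $1$ with at most $(p+1)/2$ terms, and $|\cA\cap\cP(\theta)|=\sum_{a\in\cA}\mathbf 1\bigl[a/p-\theta\in(0,\tfrac12)\bmod 1\bigr]$.

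The heart of the argument is the elementary identity, valid for all real $y$,
$$ \int_0^1\mathbf 1\bigl[y-\theta\in(0,\tfrac12)\bmod 1\bigr]\,w(\theta)\,d\theta \;=\; \cos^2\!\bigl(\pi(y-\tau)\bigr), \qquad w(\theta):=\pi\bigl(\sin(2\pi(\tau-\theta))\bigr)_+, $$
where $(\cdot)_+$ denotes positive part; it reduces, via the substitution $u=\tau-\theta$, to the fact that $\int_v^{v+1/2}(\sin(2\pi u))_+\,du=(1+\cos(2\pi v))/(2\pi)$ for every real $v$. Here $w\ge0$ and $\int_0^1 w=1$. Summing the identity over $y=a/p$, $a\in\cA$, interchanging sum and integral, and using $e^{-2\pi i\tau}\hcA(\chi_0)=\eta_\cA|\cA|\ge0$, I obtain
$$ \int_0^1|\cA\cap\cP(\theta)|\,w(\theta)\,d\theta \;=\; \sum_{a\in\cA}\frac{1+\cos(2\pi(a/p-\tau))}2 \;=\; \frac{1+\eta_\cA}{2}\,|\cA|, $$
so some $\theta$ satisfies $|\cA\cap\cP(\theta)|\ge\tfrac12(1+\eta_\cA)|\cA|$. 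To upgrade this to a strict inequality I would observe that $\theta\mapsto|\cA\cap\cP(\theta)|$ is integer-valued and piecewise constant, jumping by $\pm1$ exactly when $\theta$ crosses a point of $\{a/p:a\in\cA\}\cup\{a/p-\tfrac12:a\in\cA\}$; this set is invariant under $\theta\mapsto\theta+\tfrac12$ and (using $|\cA|\ge2$) has more than two elements, hence meets the interval $(\tau-\tfrac12,\tau)$ on which $w>0$. Therefore $|\cA\cap\cP(\theta)|$ is not $w$-almost-everywhere equal to its $w$-average, and must strictly exceed that average on a set of positive measure; choosing $\theta_0$ in this set but away from the finitely many jump points and from $\tfrac1p\Z$, the progression $\cP:=\cP(\theta_0)$ has at most $(p+1)/2$ terms and satisfies $|\cA\cap\cP|>\tfrac12(1+\eta_\cA)|\cA|$. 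Undoing the dilation yields the required progression for the original $\cA$.

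I expect the one genuinely non-obvious point to be the choice of the weight $w$. A single, suitably placed arc does not suffice: the arc on which $\mathrm{Re}\bigl(\overline{\hcA(\chi_0)}\,\chi_0(x)\bigr)\ge0$ captures only about $\eta_\cA|\cA|$ elements of $\cA$, not $\tfrac12(1+\eta_\cA)|\cA|$, and shifting or resizing a single arc cannot be controlled. What makes the average collapse to $\cos^2$ — hence to the clean value above — is that $(\sin)_+$ carries no odd Fourier harmonics beyond the first, so that only the first-harmonic contribution of $\cA$, which is precisely $\eta_\cA$, survives while all higher-frequency error terms vanish. Everything else — identifying $\cP(\theta)$ as a progression of the right length, the interchange of sum and integral, and the passage from ``$\ge$'' to ``$>$'' — is routine.
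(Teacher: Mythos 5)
The paper does not prove Lemma\refl{arcs} at all; it is quoted from Freiman's monograph and used as a black box, so there is no in-paper proof to compare against. Your averaging argument is, however, correct and gives a clean self-contained derivation. I verified the key identity by the substitution $u=\tau-\theta$ and periodicity: for $w(\theta)=\pi\bigl(\sin(2\pi(\tau-\theta))\bigr)_+$ one indeed gets $\int_0^1\mathbf 1[y-\theta\in(0,\tfrac12)\bmod 1]\,w(\theta)\,d\theta=\cos^2(\pi(y-\tau))$, and summing over $a\in\cA$ produces exactly $\tfrac12(1+\eta_\cA)|\cA|$; the number of $j\in\{0,\dots,p-1\}$ with $j/p$ in an open arc of length $\tfrac12$ is at most $\lceil p/2\rceil=(p+1)/2$, so every $\cP(\theta)$ qualifies. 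Your Fourier-theoretic explanation of why $(\sin)_+$ is the right weight (it carries no odd harmonics beyond $\pm1$, while the arc indicator has only odd harmonics besides the constant, so the convolution collapses to frequencies $0,\pm1$) is exactly the right heuristic and is, as far as I know, in the spirit of Freiman's original argument.

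Two small points worth flagging. First, the strictness step needs $p$ odd (so that no $a/p$ coincides with any $a'/p-\tfrac12$ modulo $1$, making the $2|\cA|$ jump points distinct) as well as $|\cA|\ge2$; both are automatic for the paper's application but should be stated, since the lemma as written is literally false when $|\cA|=1$ (then $\eta_\cA=1$ and the conclusion would force $|\cA\cap\cP|\ge2$) and when $p=2$. Second, your phrase ``away from $\tfrac1p\Z$'' is unnecessary: the bound $|\cP(\theta)|\le(p+1)/2$ holds for every $\theta$, not just generic ones.
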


Finally, we need the symmetric case of a version of the $(3n-4)$-theorem due
to Grynkiewicz.
\begin{theorem}%
  [Special case of~{\cite[Theorem~7.1]{b:G13}}]\label{t:3n-4real}
Let $A$ be a finite set of integers. If $|2A|\le3|A|-4$, then $A$ is
contained in an arithmetic progression with at most $|2A|-|A|+1$ terms, and
$2A$ contains an arithmetic progression with the same difference and at least
$2|A|-1$ terms.
\end{theorem}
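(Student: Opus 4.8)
The plan is to reduce to a normalised configuration, obtain the ``short progression'' conclusion from Freiman's classical lower bound for $|2A|$, and then derive the ``long progression inside $2A$'' conclusion from the resulting fact that $A$ fills up most of that short progression.

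\textbf{Normalisation.} Replacing $A$ by $(A-\min A)/\gcd(A-\min A)$ changes neither $|A|$, nor $|2A|$, nor the validity of the two conclusions (it merely rescales the common difference of the progressions involved), so we may assume $A=\{0=a_0<a_1<\dots<a_{n-1}=\ell\}$ with $n:=|A|$ and $\gcd(A)=1$. The progression $[0,\ell]$ then has $\ell+1$ terms and difference $1$, so the first conclusion is equivalent to $\ell\le|2A|-n$, and the second is the assertion that $2A$ contains $2n-1$ consecutive integers.

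\textbf{First conclusion.} The key input is the classical sharpening of \refl{dimension_min} in the case $d=1$ due to Freiman: for finite $A\seq\Z$ with $0,\ell\in A$, $n=|A|$ and $\gcd(A)=1$, one has $|2A|\ge\min\{n+\ell,\ 3n-3\}$. Given this, the hypothesis $|2A|\le 3n-4$ rules out $\ell\ge 2n-2$ (which would force $|2A|\ge 3n-3$), whence $\ell\le 2n-3$ and the bound becomes $|2A|\ge n+\ell$, i.e.\ $\ell\le|2A|-n$, which is the first conclusion. I would prove Freiman's bound by the standard telescoping count: the $2n-1$ sums $\{a_0+a_i\}_i\cup\{a_{n-1}+a_i\}_i$ are pairwise distinct and realise $A\cup(\ell+A)$; then, scanning $[0,\ell]$, every position at which $A$ has a gap of length $\ge 2$ yields, via a sum of the shape $a_1+a_j$ (or symmetrically $a_{n-1}+a_j-\ell$ near the top), a further element of $2A$ lying strictly between two consecutive sums already listed, and $\gcd(A)=1$ guarantees at least $\min\{\ell,2n-3\}-(n-1)$ such gaps before the end of the interval. (Equivalently one may induct on $n$, removing $\ell$ from $A$ and checking that at least two or three ``top'' sums $\ell+(\cdot)$ are recovered after re-normalising the gcd.)

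\textbf{Second conclusion.} From $\ell\le|2A|-n\le 2n-4$ one gets that $B:=[0,\ell]\stm A$ has size $t:=\ell+1-n\le n-3$, so $A$ omits only few integers of $[0,\ell]$; combining $|2A|\ge n+\ell$ with $2A\seq[0,2\ell]$ shows $2A$ omits at most $t$ integers of $[0,2\ell]$. The remaining task is to prove these omissions do not chop $[0,2\ell]$ into pieces all shorter than $2n-1$, and for this one needs their location, not merely their number: an integer $m\notin 2A$ has every one of its $\min\{m,2\ell-m\}+1$ representations $m=x+y$ ($x,y\in[0,\ell]$) blocked by a point of $B$, which — given $|B|=t$ — pushes $m$ near an endpoint of $[0,2\ell]$ and forces $B$ to be concentrated there, unless the initial or terminal run of $A$ in $[0,\ell]$ is short, in which case a suitable translate of that run already supplies a long enough interval inside $2A$ (e.g.\ if $[0,s]\seq A$ with $2s$ comparable to $\ell$, then $[0,2s]\cup([0,s]+\{\ell\})=[0,\ell+s]\seq 2A$ is such an interval, and a count against $t$ shows $\ell+s$ is large enough). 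Running through the finitely many cases and comparing lengths against $2n-1$ completes the proof; the configuration $A=[0,n-2]\cup\{2n-4\}$, with $2A=[0,3n-6]\cup\{4n-8\}$, shows tightness.

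\textbf{Main obstacle.} Once Freiman's lower bound is available the first conclusion is immediate, and that bound is a routine count. The substantive part is the second conclusion: one must control \emph{where} the few integers missing from $2A$ sit, since crude pigeonhole on the number of omitted integers by itself does not produce a run of $2n-1$ consecutive integers — the omissions must be shown to cluster near the ends of $[0,2\ell]$, with the cases of short initial or terminal runs of $A$ handled separately via the translated-run (``bridge'') interval. Carrying out this clustering analysis cleanly is the delicate step, and is precisely why the sharpened ``symmetric'' $3n-4$ theorem is appreciably harder than the bare statement that $A$ lies in a short progression.
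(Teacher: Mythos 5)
The paper does not actually prove this statement: it is quoted as a special case of Grynkiewicz's Theorem~7.1 and used as a black box, so there is no in-paper argument to compare with and your attempt must stand on its own.

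On its own terms, your first conclusion is fine modulo the classical bound $|2A|\ge |A|+\min\{\ell,2|A|-3\}$ for normalized $A\subseteq[0,\ell]$ with $0,\ell\in A$ and $\gcd(A)=1$ (Freiman; see also Lev--Smeliansky); your sketch of that bound is loose, but citing it would be legitimate. The genuine gap is in the second conclusion, and it sits exactly where you yourself locate the ``delicate step''. The blocking count you describe gives only this: if $m\in[0,2\ell]\setminus 2A$, then every unordered pair $\{x,m-x\}$ with $x,m-x\in[0,\ell]$ meets $B:=[0,\ell]\setminus A$, and each $b\in B$ lies in at most one such pair, so $\min\{m,2\ell-m\}\le 2t-1$ with $t:=\ell+1-n$. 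This localizes the missing sums to within $2t$ of the endpoints of $[0,2\ell]$ and hence guarantees only the run $[2t,\,2\ell-2t]\subseteq 2A$, of length $2\ell-4t+1=4n-2\ell-3$, which is at least $2n-1$ only when $\ell\le n-1$. Since the hypothesis $|2A|\le 3n-4$ allows $\ell$ as large as $2n-4$, the quantitative heart of the theorem is precisely the regime $n\le\ell\le 2n-4$, where your argument as written produces nothing. Everything you add beyond that point (``forces $B$ to be concentrated there, unless the initial or terminal run of $A$ is short'', the ``bridge'' interval, ``running through the finitely many cases'') is a plan rather than a proof: you never show that in this regime $A$ must consist of one long run together with a few elements clustered near a single end of $[0,\ell]$, nor do you carry out the length comparison for the bridge interval against $2n-1$. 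That structural analysis is the actual content of the symmetric $(3n-4)$ statement (it occupies a substantial proof in Grynkiewicz's Chapter~7), so as it stands the second conclusion is not established.
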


\section{Proof of Theorem~\reft{main}}\label{s:proof}

Throughout the proof, we identify $\C_p$ with the additive group of the
$p$-element field; accordingly, the automorphisms of $\C_p$ are identified
with the dilates. We write $d\ast\cA:=\{da\colon a\in\cA\}$ where $d$ is an
integer or an element of $\C_p$.

For $u\le v$, by $[u,v]$ we denote both the set of all integers $u\le z\le v$
and the image of this set in $\C_p$ under the homomorphism $\phi_p$. We may
also occasionally identify integers with their images under $\phi_p$. For
brevity, we write $p':=(p-1)/2$.

Assuming that $\cA\seq\C_p$ satisfies $|2\cA|\le K|\cA|-3$ with $K<2.7652$
and $20\le|\cA|<1.25\cdot10^{-6}p$, we prove that $\cA$ is contained in an
arithmetic progression with at most $(p+1)/2$ terms; equivalently, there is
an affine transformation that maps $\cA$ into a subset of an interval of
length at most $p'$. This will show that $\cA$ is rectifiable and imply the
result in view of Theorem~\reft{3n-4real}.

Let $\cA_0$ be a subset of $\cA$ of the largest possible size such that
$\cA_0$ is contained in an arithmetic progression with at most $(p+1)/2$
terms. We observe that, by the maximality of $|\cA_0|$, if $\cA_0\seq [0,l]$
with an integer $0\le l\le p'$, then the two intervals of length $p'-l-1$
adjacent to $[0,l]$ ``from the left'' and ``from the right'' do not contain
any elements of $\cA$; that is,
    $$ [l+p'+1,p-1]\cap\cA = [l+1,p']\cap\cA = \est. $$
Therefore
\begin{equation}\label{e:scorpio1}
  \cA \stm \cA_0 \seq [p'+1,p'+l] = p'+[1,l].
\end{equation}

Suppose first that $\cA_0$ is contained in an arithmetic progression with at
most $2\cdot 10^5|\cA_0|$ terms. Having applied a suitable affine
transformation, we assume that $\cA_0\seq[0,l]$ with $l<2\cdot10^5|\cA_0|$.
By \eqref{e:scorpio1}, we have
  $$ 2\ast\cA \seq (2\ast\cA_0) \cup [1,2l-1] \seq [0,2l]. $$
In view of $2l+1<4\cdot 10^5|\cA_0|\le p'$, this shows that the affine
transformation $z\mapsto 2z$ maps $\cA$ into an interval of length at most
$p'$, which is shown above to imply the result.

We therefore assume from now on that $\cA_0$ is not contained in an
arithmetic progression with $2\cdot 10^5|\cA_0|$ or fewer terms; in
particular, the set $\cA_0$ itself is not an arithmetic progression.

By Corollary~\refc{bias} and Lemma~\refl{arcs}, and in view of
$|\cA_0|\ge\frac12|\cA|\ge 10$ and
$|\cA_0|\le|\cA|<1.25\cdot10^{-6}p<10^{-5}p$, we have
\begin{equation}\label{e:413K}
  |\cA_0| > \frac{4}{13}K|\cA|,
\end{equation}
and it follows that
\begin{equation}\label{e:cA0cA0}
   |2\cA_0| \le |2\cA| \le K|\cA| - 3  < \frac{13}{4}|\cA_0| - \frac94.
\end{equation}
Recalling the way the set $\cA_0$ has been chosen, we find a set $A_0\seq\Z$
such that $\cA_0=\phi_p(A_0)$, $|A_0|=|\cA_0|$, and $A_0$ is contained in an
arithmetic progression with a most $p'+1$ terms; thus, $A_0$ is
Freiman-isomorphic to $\cA_0$, and as a result,
  $$ |2A_0| < \frac{13}{4}\,|A_0| - \frac94. $$
Since $\cA_0$ is not contained in an arithmetic progression with $2\cdot
10^5|\cA_0|$ or fewer terms, neither is $A_0$. (This does not follow from the
mere fact that $A_0$ and $\cA_0$ are Freiman-isomorphic, but does follow
immediately by observing that $\cA_0$ is the image of $A_0$ under a group
homomorphism.) Consequently, by Proposition~\refp{4k-8_partial}, we conclude
that $\dim(A_0)\ge 2$, and then, indeed, $\dim(A_0)=2$ by
Lemma~\refl{dimension_min}. Applying Lemma~\refl{Freiman_2lines}, we derive
that $A_0$ is contained in the union of two arithmetic progressions, say
$P_1$ and $P_2$, with the same difference, such that $|P_1\cup P_2|\le
|2A_0|-2|A_0|+3$ and the sumsets $2P_1$, $P_1+P_2$ and $2P_2$ are pairwise
disjoint. Hence, $\cA_0$ is contained in the union of the disjoint
progressions $\cP_1:=\phi_p(P_1)$ and $\cP_2:=\phi_p(P_2)$. Let
$\cA_1=\cA_0\cap \cP_1$ and $\cA_2=\cA_0\cap \cP_2$. Without loss of
generality, we assume that $|\cA_1|\ge|\cA_0|/2$.

Applying a suitable affine transformation,
we can arrange that
\begin{itemize}
\item[(i)] each of the progressions $\cP_1$ and $\cP_2$ has difference $1$
    or is a singleton;
\item[(ii)] there are integers $0\le b<c\le d$ such that $\cP_1\seq[0,b]$,
    $|\cP_1|=b+1$, and $\cP_2\seq[c,d]$, $|\cP_2|=d-c+1$;
\item[(iii)] the interval $[b,c]$ is at most as long as the interval
    $[d,p]$:
    \begin{equation}\label{e:0302b}
       c-b\le p-d.
    \end{equation}
\end{itemize}

Recalling~\refe{413K}, we obtain
\begin{multline*}\label{e:P1P2}
  b+d-c = |\cP_1|+|\cP_2|-2 \le |2\cA_0|-2|\cA_0|+1 \\
      \le |2\cA|-2|\cA_0|+1 < K|\cA|-\frac{8}{13}K|\cA|
                                        = \frac5{13}K|\cA| < 2|\cA|,
\end{multline*}
whence
\begin{equation}\label{e:0302a}
  b + (d-c) < 2|\cA|.
\end{equation}
Writing $n:=|\cA|$, we therefore have
\begin{equation}\label{e:bdc}
  \cA_1\seq[0,b]\seq[0,2n], \quad \cA_2\seq c+[0,d-c]\seq c+[0,2n],
\end{equation}
and also
  $$ (c-b)+(p-d) = p-(d-c)-b > p-2n. $$
Along with~\refe{0302b}, the last estimate gives $p-d \ge p'-n+1$ and,
consequently, $d\le p'+n$. In fact, we have
\begin{equation}\label{e:dul}
  4n < d < p'-4n;
\end{equation}
here the lower bound follows immediately from the assumption that $\cA_0$ is
not contained in a progression with $2\cdot10^5|\cA_0|$ or fewer terms, and
the upper bound follows by observing that if we had $p'-4n\le d\le p'+n$, in
view of~\refe{0302a} this would imply $[c,d]=[d-(d-c),d]\seq[d-2n,d]\seq
p'+[-6n,n]$ and, consequently,
$2\ast\cA_0\seq[0,2b]\cup[-12n-1,2n-1]\seq[-12n-1,4n]]$, also in a
contradiction with the same assumption.

We have $2\cA_0=2\cA_1\cup(\cA_1+\cA_2)\cup2\cA_2$ where the union is
disjoint; therefore, by the Cauchy-Davenport theorem,
 $$ |2\cA_0|\ge (2|\cA_1|-1) + (|\cA_1|+|\cA_2|-1) + (2|\cA_2|-1)
                                                      = 3|\cA_0|-3. $$
It follows that for any $a\in\cA\stm\cA_0$ we have
$(a+\cA_1)\cap(2\cA_0)\ne\est$, as assuming the opposite,
\begin{multline*}
 |2\cA| \ge |2\cA_0|+|a+\cA_1| \ge 3|\cA_0|-3+\frac12|\cA_0|
      > \frac72\cdot\frac4{13}K|\cA| - 3 = \frac{14}{13}K|\cA| - 3,
\end{multline*}
a contradiction. Therefore,
\begin{equation}\label{e:0402a}
  \cA\stm\cA_0 \seq 2\cA_0 - \cA_1 \seq \{0,c,2c\}+[-2n,4n].
\end{equation}
On the other hand, since $d<p'$, we can apply \refe{scorpio1} with $l=d$ to
get
\begin{equation}\label{e:0402b}
  \cA \stm \cA_0 \seq p'+[1,d].
\end{equation}
Comparing~\refe{0402a} and~\refe{0402b}, and observing that, in view
of~\refe{dul}, both intervals $[-2n,4n]$ and $c+[-2n,4n]$ are disjoint from
the interval $p'+[1,d]$, we conclude that
\begin{gather}
  \cA\stm\cA_0 \seq 2c+[-2n,4n] \label{e:2c2J1}
  \intertext{and, consequently,}
  \cA\seq\{0,c,2c\}+[-2n,4n]. \notag
\end{gather}

We notice that the set $2(\cA\stm\cA_0)$ is not disjoint from the set
$2\cA_0$ as otherwise we would get
\begin{multline*}
  |2\cA| \ge |2(\cA\stm\cA_0)| + |2\cA_0| \ge 2|\cA\stm\cA_0|-1 + 3|\cA_0|-3 \\
        = 2|\cA| + |\cA_0| - 4 \ge \left(2+\frac4{13}K\right)|\cA|-4 > K|\cA|-3.
\end{multline*}
Since $2(\cA\stm\cA_0)\seq 4c+[-4n,8n]$ by~\refe{2c2J1}, and
$2\cA_0\seq\{0,c,2c\}+[0,4n]$ in view of~\refe{bdc}, we conclude that
$kc\in[-8n,8n]$ for some $k\in\{2,3,4\}$. Therefore
$k\ast\cA_0\seq\{0,kc\}+[0,2kn]\seq [-8n,(8+2k)n]$. Hence, $\cA_0$ is
contained in an arithmetic progression with at most
$(16+2k)n+1<25n<2\cdot10^5|\cA_0|$ terms, a contradiction.

\vfill

\bigskip

\end{document}